 \newcommand{\ROM}[1]{\mathrm{\uppercase\expandafter{\romannumeral#1}}}
  \theoremstyle{definition}
 \newtheorem{thm}{Theorem}[section]
 \newtheorem{lem}{Lemma}[section]
 \newtheorem{cor}{Corollary}[section]
 \newtheorem{rem}{Remark}[section]
 \newtheorem{prop}{Proposition}[section]
\newtheorem{ack}{Acknowledgements}   
\title[Geometry of isoparametric hypersurfaces in Riemannian manifolds]{\textbf{Geometry of isoparametric hypersurfaces in Riemannian manifolds}}
\author[J. Q. Ge]{Jianquan Ge}\address{School of Mathematical Sciences, Laboratory of Mathematics and Complex Systems, Beijing Normal
University, Beijing 100875}\email{jqge@bnu.edu.cn}
\thanks {The project is partially supported by the NSFC ( No.11071018 and No.11001016
), the SRFDP (No.
20100003120003), and the Program for Changjiang Scholars and Innovative
Research Team in University.}
\author[Z. Z. Tang]{Zizhou Tang}\address{School of Mathematical Sciences, Laboratory of Mathematics and Complex Systems, Beijing Normal
University, Beijing 100875}\email{zztang@mx.cei.gov.cn}
 \subjclass[2010]{ 53C20.}
\date{}
\keywords{isoparametric function, focal submanifold, constant mean curvature.}
\begin{document}
\maketitle
\begin{abstract}
In our previous work, we studied isoparametric functions on Riemannian
manifolds, especially on exotic spheres. One result there says that,
in the family of isoparametric hypersurfaces of a closed Riemannian
manifold, there exists at least one minimal isoparametric
hypersurface. In this paper, we show such a minimal isoparametric
hypersurface is also unique in the family if the ambient manifold
has positive Ricci curvature. Moreover, we give a proof of
Theorem D claimed by Q.M.Wang (without proof) which asserts that the
focal submanifolds of an isoparametric function on a complete
Riemannian manifold are minimal. Further, we study isoparametric
hypersurfaces with constant principal curvatures in general
Riemannian manifolds. It turns out that in this case the focal
submanifolds have the same properties as those in the standard sphere,
\emph{i.e.}, the shape operator with respect to any normal direction
has common constant principal curvatures. Some necessary conditions
involving Ricci curvature and scalar curvature are also derived.
\end{abstract}

\section{Introduction}
A non-constant smooth function $f: N\rightarrow \mathbb{R}$ defined
on a smooth connected Riemannian manifold $N$ is called \emph{transnormal} if there
is a smooth function $b: J\rightarrow\mathbb{R}$ such that
\begin{equation}\label{iso1}
|\nabla f|^2=b(f),
\end{equation}
where $J=f(M)\subset \mathbb{R}$ and $\nabla f$ is the gradient of $f$. If moreover there is a continuous function $a: J\rightarrow\mathbb{R}$ such that
\begin{equation}\label{iso2}
\triangle f=a(f),
\end{equation}
where $\triangle f$ is the Laplacian of $f$, then $f$ is called
\emph{isoparametric} (cf. \cite{Wa87}, \cite{GT09}). Equation
(\ref{iso1}) means that the regular hypersurfaces $M_t:=f^{-1}(t)$
(where $t$ is a regular value of $f$) are parallel and (\ref{iso2})
says that these parallel hypersurfaces have constant mean
curvatures. These regular level hypersurfaces $M_t:=f^{-1}(t)$ of an
isoparametric function $f$ are called \emph{isoparametric
hypersurfaces}. A transnormal function $f$ on a complete Riemannian manifold has no critical value in $Int(J)$ (\cite{Wa87}). The preimage of the maximum (resp. minimum), if it exists, of an
isoparametric (or transnormal) function $f$ is called the
\emph{focal variety} of $f$, denoted by $M_{+}$ (resp.
$M_{-}$).\footnote{In our papers, for simplicity we always assume in
the definition that $b$ is smooth, \emph{i.e.}, $C^{\infty}$. In
fact, all the results go through when $b$ is merely $C^2$ as
remarked in \cite{Wa87}. In private discussions, G. Thorbergsson
pointed out that it should also work for $b$ being a continuous
function in the case of both equations (\ref{iso1}-\ref{iso2}) are
satisfied, since then there should exist some other function
$\tilde{f}$ with the same level sets as $f$ such that it satisfies
equations (\ref{iso1}-\ref{iso2}) for some smooth ($C^2$) function
$b$.}

A fundamental structural result given by \cite{Wa87} says that the
focal varieties $M_{\pm}$ of a transnormal function on a complete Riemannian manifold are smooth
submanifolds and each regular level hypersurface $M_t$ is a tubular hypersurface
over either of $M_{\pm}$. In our previous work \cite{GT09}, we
called an isoparametric (or transnormal) function \emph{proper} if
each component of $M_{\pm}$ has codimension not less than 2. For a
properly isoparametric function, $M_{\pm}$ are just the focal sets
of each regular level hypersurface $M_t$ and all level sets are
connected. Moreover, there we proved that at least one properly
isoparametric hypersurface is minimal if the ambient space $N$ is
closed. In this paper, by using the Riccati equation, we observe
that such a minimal isoparametric hypersurface is also unique in its
family if $N$ has positive Ricci curvature. Next, we express the
shape operator $S(t)$ of $M_t$ as a power series with respect to the
distance between $M_t$ and $M_{\pm}$. As an immediate consequence,
we can give a complete proof to Theorem D of \cite{Wa87}
(without proof there):
\begin{thm}\label{minimal focal}\footnote{Note added in proof: Very recently we happened to see an unpublished paper by Ni \cite{Ni97} from internet which gave another different proof of Theorem \ref{minimal focal}.}
The focal varieties $M_{\pm}$ of an isoparametric function $f$ on a
complete Riemannian manifold $N$ are minimal submanifolds.
\end{thm}
In fact, by combining Wang's structural result \cite{Wa87}, one can see
that under an additional assumption that $N$ and $M_{\pm}$ are all
compact, Mazzeo and Pacard \cite{MP05} have essentially proved the
above theorem as a special case in their Theorem 6.1 by an argument
from geometric measure theory which is not fit for the noncompact case.
In private communications, we learned that Miyaoka (cf. \cite{Mi12}) is also
concerned with a complete proof of Theorem \ref{minimal focal}.
Further study of the power series expression establishes
\begin{thm}\label{austere focal}
Suppose that each isoparametric hypersurface $M_t$ has constant
principal curvatures\footnote{When $M_t$ is disconnected, we assume that all connected components have common constant principal curvatures, or equivalently, the principal curvatures of $M_t$ depend only on $f|_{M_t}=t$.}\label{constantcommon} with respect to the unit normal vector field in the direction of $\nabla f$. Then each of the focal varieties $M_{\pm}$ has
common constant principal curvatures in all normal directions,
\emph{i.e.}, the eigenvalues of the shape operator are constant and independent of the choices of the point and unit normal
vector of $M_{\pm}$.
\end{thm}
As a corollary, the principal curvatures of such focal varieties
occur as pairs of opposite signs and thus $M_{\pm}$ are
\emph{austere} (minimal) submanifolds in the sense of \cite{HL82}.
As byproducts, we obtain some necessary curvature conditions for a
manifold to admit (certain) isoparametric functions in Corollary
\ref{minimal curv rest} and Corollary \ref{austere curv rest} in
Section \ref{proof}.

Submanifolds with constant principal curvatures are regarded as the
analogue in submanifold geometry to locally symmetric spaces in
Riemannian geometry in \cite{BCO03} where the normal holonomy theory
takes important role in the studies of such submanifolds in space
forms. It is worth mentioning that the normal holonomy theory for
submanifolds in general Riemannian manifolds has not been fully
developed though it seems fruitful in many areas (cf. \cite{Br99},
\cite{CDO08}, \emph{etc.}). Note that Theorem \ref{minimal focal}
and Theorem \ref{austere focal} generalize corresponding results in
the classical theory of isoparametric hypersurfaces in real space forms
(cf. \cite{No73}, \cite{Mu80}), recalling that, a hypersurface $M^n$ in
a real space form $N^{n+1}(c)$ with constant sectional curvature $c$
is said to be \emph{isoparametric} if it has constant principal
curvatures. Cartan (\cite{Ca38},\cite{Ca39}) and M{\"u}nzner
(\cite{Mu80}) showed that such an isoparametric hypersurface belongs to
a family of parallel hypersurfaces of constant mean curvature which
consists of regular level sets of a homogeneous polynomial (called
\emph{isoparametric polynomial}) satisfying the so-called
Cartan-M{\"u}nzner equation. Provided with the ``good symmetry" of
space forms, M{\"u}nzner \cite{Mu80} could obtain more specific
properties about principal curvatures of the focal varieties and
isoparametric hypersurfaces than Theorem \ref{austere focal} does,
on which they were heavily depended in the proof of his splendid
result about the number $g$ of distinct principal curvatures of
isoparametric hypersurfaces in spheres (For the theory of
isoparametric hypersurfaces in the model space forms, see \cite{Th00} for an excellent survey
and see \cite{CCJ07}, \cite{Imm08}, \cite{GX10} for recent
progresses and applications).

\section{The shape operator of Tubes}\label{section-isop}
In this section we use similar definitions and notations as in
\cite{Gr04} and \cite{MMP06}. Let $P^m\subset M^n$ be an
$m$-dimensional embedded submanifold of an $n$-dimensional
Riemannian manifold $M$. For a given $p\in P$, we now introduce
Fermi coordinates in a neighborhood $\mathcal {U}'$ of $p$ in $M$.
First we choose normal geodesic coordinates $(y_1,\cdots,y_m)$
centered at $p$ in a neighborhood $\mathcal {U}$ of $p$ in $P$. Then
in $\mathcal {U}$ we fix an orthonormal sections
$E_{m+1},\cdots,E_n$ of the normal bundle $\mathcal {V}$ of $P$ in
$M$ such that they are parallel with respect to the normal
connection along any geodesic ray from $p$ in $P$. The Fermi
coordinates $(x_1,\cdots,x_n)$ of $(\mathcal {U}\subset P\subset)$
$\mathcal {U}'\subset M$ centered at $p$ are defined by
\[x_a\left(exp_{p'}\left(\sum_{j=m+1}^nt_jE_j(p')\right)\right)=y_a(p') \quad \quad (a=1,\cdots,m),\]
\[x_i\left(exp_{p'}\left(\sum_{j=m+1}^nt_jE_j(p')\right)\right)=t_i \quad \quad (i=m+1,\cdots,n),\]
for $p'\in\mathcal {U}$ and any sufficiently small numbers
$t_{m+1},\cdots,t_n$. From the definitions, it is easily seen that
the coordinates vector fields $\partial x_1,\cdots,\partial x_n$
satisfy
\begin{eqnarray}
\nabla_{\partial x_a}\partial x_b|_p\in\mathcal {V}_pP, \quad \quad
\nabla_{\partial x_a}\partial x_i|_p\in\mathcal {T}_pP,\quad \quad\nabla_{\partial x_i}\partial x_j|_{\mathcal {U}}=0,\label{covar-deriv}\\
\langle\partial x_{\alpha}, \partial
x_{\beta}\rangle|_p=\delta_{\alpha\beta}, \quad \quad
\langle\partial x_{a}, \partial x_{i}\rangle|_\mathcal {U}=0,\quad
\quad \langle\partial x_{i}, \partial x_{j}\rangle|_\mathcal
{U}=\delta_{ij}, \label{coeffi ortho}
\end{eqnarray}
where $\nabla$ denotes the covariant derivative in $M$,
$\langle \cdot,\cdot\rangle$ denotes the metric, and the indices convention is
that indices $a,b,c,\cdots\in\{1,\cdots,m\}$, indices
$i,j,k,\cdots\in\{m+1,\cdots,n\}$ and indices
$\alpha,\beta,\gamma,\cdots\in\{1,\cdots,n\}$.

In terms of Fermi coordinates, the distance function, say $\sigma$,
to $P$ in $M$ and the (outward) unit normal vector field, say $N$,
of any tubular hypersurface $P_t$ at a distance $\sigma=t>0$ from
$P$ can be written as
$$
\sigma(q')=\sqrt{\sum_{i=m+1}^nx_i^2} \quad and \quad
N(q')=\sum_{i=m+1}^n\frac{x_i}{\sigma}\frac{\partial}{\partial
x_i}\triangleq\nabla\sigma ,
$$
 for $q'=exp_{p'}\left(\sum_{j=m+1}^nx_jE_j(p')\right)\in P_t\subset\mathcal
{U}'-P$, where the last equality $(\triangleq)$ is just the
\emph{Generalized Gauss Lemma}.

The shape operator $S(t)$ of $P_t$ with respect to $N$ at $q'\in
P_t$ is just the restriction to $P_t$ of the tensorial operator
$S:\mathcal {T}(\mathcal {U}'-P)\rightarrow \mathcal {T}(\mathcal
{U}'-P)$ defined by
\begin{equation}\label{shape def}
SU=-\nabla_UN,
\end{equation}
 for $U\in \mathcal {T}_{q'}(\mathcal {U}'-P)$. It is
easily seen that $S$ is symmetric and $SN=0$. Then covariant
derivative of $S(t)$ along normal geodesic of $P_t$ gives the
\emph{Riccati equation}:
\[S'(t)=S(t)^2+R(t),\]
where $R(t)U=R_{NU}N=(\nabla_{[N,U]}-[\nabla_N,\nabla_U])N$ for
$U\in \mathcal {T}_{q'}P_t$. Taking trace shows that
\begin{equation}\label{Riccati eq}
H'(t)=\parallel S(t)\parallel^2+\rho^M(N,N),
\end{equation} where
$H(t)=Trace(S(t))$ is the mean curvature of $P_t$, $\rho^M$ is the
Ricci curvature of $M$ and $\parallel\cdot\parallel$ is the norm
induced from the metric. Therefore, if the Ricci curvature $\rho^M$
is positive, $H(t)$ is a strictly increasing function as $t$ grows.
Recall that in \cite{GT09}, we showed that at least one is minimal
among a family of properly isoparametric hypersurfaces in a closed
Riemannian manifold. Combining these with the structural result of
\cite{Wa87}, we get
\begin{cor}
There exists a unique minimal isoparametric hypersurface among a
family of properly isoparametric hypersurfaces in a closed
Riemannian manifold of positive Ricci curvature.
\end{cor}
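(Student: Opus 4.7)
The plan is to combine the Riccati identity (\ref{Riccati eq}) with the structural theorem of \cite{Wa87} and the existence result from \cite{GT09}. First, I would invoke Wang's structural theorem to realize the family of parallel isoparametric hypersurfaces $\{M_t\}$ as tubular hypersurfaces $\{P_t\}_{t\in(0,T)}$ over one of the focal submanifolds, say $M_-$, where $t$ is the distance from $M_-$ and $T$ is the distance between the two focal varieties. This reparameterization is the crucial preparatory step: the natural parameter coming from $f$ need not agree with arc length, but the structural theorem allows us to pass to the distance parameter, for which the setup of Section \ref{section-isop} applies verbatim. In particular, a globally consistent unit normal $N=\nabla\sigma$ along the whole family is available, and the properness hypothesis ensures that $M_-$ has codimension at least two so that this tube description is non-degenerate.

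Next, with this identification in place, the mean curvature $H(t)=\mathrm{Trace}(S(t))$ of $M_t$ becomes a smooth function on $(0,T)$, and the Riccati identity gives
\[H'(t)=\parallel S(t)\parallel^2+\rho^M(N,N).\]
Since $N$ is a unit vector and the ambient Ricci curvature is assumed positive, the right hand side is strictly positive, so $H$ is strictly increasing on the whole interval $(0,T)$. Consequently $H$ has at most one zero, which yields the uniqueness statement: no two distinct hypersurfaces in the family can simultaneously be minimal.

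Finally, existence is furnished directly by the result recalled just above the corollary from \cite{GT09}: on a closed Riemannian manifold, at least one member of the family of properly isoparametric hypersurfaces is minimal. Combining existence and uniqueness completes the proof. The main obstacle, such as it is, lies in the initial bookkeeping: one must verify that the Riccati equation applies globally across the entire family rather than only in a local tubular neighborhood, and this is precisely where Wang's structural theorem together with the properness assumption are indispensable. Once this global tube description is in place, the strict monotonicity of $H(t)$ and hence the corollary are immediate.
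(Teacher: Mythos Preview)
Your proposal is correct and follows essentially the same route as the paper: invoke Wang's structural theorem to parametrize the family by distance $t$ from a focal variety, apply the traced Riccati equation (\ref{Riccati eq}) to conclude $H'(t)=\|S(t)\|^2+\rho^M(N,N)>0$ under the positive Ricci assumption so that $H$ is strictly increasing and can vanish at most once, and then combine with the existence result from \cite{GT09}. The only minor remark is that the tube description from \cite{Wa87} does not itself require properness; properness is needed rather for the existence result you cite from \cite{GT09}.
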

\begin{rem}
In Proposition 4.1 of \cite{GT09}, we found isoparametric functions
which are proper on each Milnor sphere, and Remark 4.2 of
\cite{GT09} told that in that situation the Milnor spheres could
carry metrics of positive Ricci curvature (even non-negative
sectional curvature). So the above corollary tells that there is one
and only one minimal isoparametric hypersurface in each family.
\end{rem}

Now we come to derive the power series expansion formula for $S(t)$
with respect to $t$. For any normal vector $v\in \mathcal {V}_pP$,
the shape operator $T_v$ of $P$ in direction $v$ at $p$ is defined
by
\[\langle T_v(X),Y\rangle=\langle\nabla_XY,v\rangle=-\langle\nabla_XV,Y\rangle|_p~,\] for any
vector fields $X,Y$ tangent to $P$, where $V$ is any normal vector
field with $V|_p=v$. Let $\{T^v_{ab}\}$ denote the coefficients of
$T_v$ under the coordinate vector fields, \emph{i.e.},
$$T_v(\partial x_a|_p)=\sum_{b=1}^mT^v_{ab}\partial x_b|_p, \quad or \quad T^v_{ab}=\langle T_v(\partial x_a),\partial x_b\rangle|_p.$$
Then we will also write $T_v=(T^v_{ab})$ (as a matrix), and
$T_i=T_{\partial x_i|_p}=(T^i_{ab})$ for simplicity. For any unit
normal vector $v=\sum_{j=m+1}^nv_j \partial x_j|_p\in \mathcal
{V}_pP$, \emph{s.t.}, $\sum_{j=m+1}^nv_j^2=1$, we denote by
$\eta_v(t)=exp_p(tv)$ the unique geodesic in direction $v$ through
$p$ in $M$. Obviously, the Fermi coordinates of $\eta_v(t)\in P_t$
are $(0,\cdots,0,tv_{m+1},\cdots,tv_n)$, and
$$N|_{\eta_v(t)}=\eta_v'(t)=\sum_{j=m+1}^nv_j\partial
x_j|_{\eta_v(t)}.$$

Let $g_{\alpha\beta}=\langle\partial x_{\alpha},\partial
x_{\beta}\rangle$ be the metric coefficients. Then we recall their
power series expansion in preparation for that of the shape
operator.
\begin{lem}\label{metric expa}
At the point $\eta_v(t)=exp_p(tv)\in P_t$, the following expansions
hold
\begin{itemize}
\item[]$g_{ab}(t)=\delta_{ab}-2T^v_{ab}t+\left(-\langle R_{v\partial
x_a}v,\partial
x_b\rangle+\sum\limits_cT^v_{ac}T^v_{cb}\right)t^2+O(t^3),$
\item[]$g_{ai}(t)=-\frac{2}{3}\langle R_{v\partial x_a}v,\partial
x_i\rangle
t^2+O(t^3),$
\item[]$g_{ij}(t)=\delta_{ij}-\frac{1}{3}\langle R_{v\partial
x_i}v,\partial x_j\rangle t^2+O(t^3).$
\end{itemize}
\end{lem}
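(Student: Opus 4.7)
The plan is to realize each Fermi coordinate vector field along the normal geodesic $\eta_v$ as a Jacobi field (up to a rescaling), to expand these Jacobi fields to the appropriate order via the Jacobi equation, and then to read out the three metric coefficients from inner products, using parallel transport along $\eta_v$ to compare vectors at different points.

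First, I would identify the coordinate vector fields along $\eta_v$ in terms of Jacobi fields. For the tangential direction $\partial x_a$, consider the variation $\phi_s(r)=\exp_{p'(s)}\bigl(r\,\tilde v(s)\bigr)$, with $p'(s)$ the geodesic in $P$ having $p'(0)=p$ and $(p')'(0)=\partial x_a|_p$, and $\tilde v(s)=\sum_j v_j E_j(p'(s))$. Since $\phi_0(\cdot)=\eta_v$, its variation field $J_a$ is Jacobi along $\eta_v$, and the chain rule applied to the Fermi parametrization gives $\partial x_a|_{\eta_v(t)}=J_a(t)$. Using the parallelism of the $E_j$ in the normal connection along $P$-geodesics from $p$, together with the Weingarten formula $\nabla_XV=-T_VX+\nabla^\perp_XV$, one reads off $J_a(0)=\partial x_a|_p$ and $J_a'(0)=-T_v(\partial x_a|_p)$. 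For the normal direction $\partial x_i$ the basepoint is held fixed and $\partial x_i|_{\eta_v(t)}=d(\exp_p)_{tv}(\partial x_i|_p)$; the standard identification $d(\exp_p)_{tv}(w)=J(t)/t$ for the Jacobi field with $J(0)=0$, $J'(0)=w$ gives $\partial x_i|_{\eta_v(t)}=J_i(t)/t$ with $J_i(0)=0$ and $J_i'(0)=\partial x_i|_p$.

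Second, I would Taylor-expand these Jacobi fields along $\eta_v$. In the paper's curvature convention the Jacobi equation reads $J''=R(J,\dot\eta_v)\dot\eta_v$, so $J_a''(0)=-R_{v\partial x_a}v|_p$ and $J_i''(0)=0$ (the latter because $J_i(0)=0$), while differentiating the Jacobi equation once gives $J_i'''(0)=-R_{v\partial x_i}v|_p$. Working in a parallel orthonormal frame along $\eta_v$ so that Taylor coefficients can be compared to the parallel translates of vectors at $p$, this yields
\[
J_a(t)=\partial x_a|_p-t\,T_v(\partial x_a|_p)-\tfrac{t^2}{2}R_{v\partial x_a}v\big|_p+O(t^3),
\]
\[
J_i(t)/t=\partial x_i|_p-\tfrac{t^2}{6}R_{v\partial x_i}v\big|_p+O(t^3).
\]

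Finally, I would substitute these expansions into $g_{\alpha\beta}(t)=\langle\partial x_\alpha,\partial x_\beta\rangle|_{\eta_v(t)}$ and collect powers of $t$, using $\langle\partial x_\alpha,\partial x_\beta\rangle|_p=\delta_{\alpha\beta}$, the symmetry of $T_v$, and the pair symmetry $\langle R_{vX}v,Y\rangle=\langle R_{vY}v,X\rangle$. For $g_{ab}$ the two curvature cross-terms collapse by pair symmetry to $-\langle R_{v\partial x_a}v,\partial x_b\rangle$, and the $T_v$-cross-term gives $\sum_c T^v_{ac}T^v_{cb}$. For $g_{ij}$ two equal contributions of weight $-\tfrac16$ sum to the stated $-\tfrac13$. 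For $g_{ai}$ the pieces at orders $t^0$ and $t^1$ vanish by the orthogonality of $T_pP$ and $\mathcal V_pP$ together with the tangentiality of $T_v$; at order $t^2$ the contribution $-\tfrac12\langle R_{v\partial x_a}v,\partial x_i\rangle$ (from the quadratic term of $J_a$ paired with $\partial x_i|_p$) combines with $-\tfrac16\langle R_{v\partial x_i}v,\partial x_a\rangle$ (from $\partial x_a|_p$ paired with the quadratic term of $J_i/t$), which by pair symmetry equals $-\tfrac23\langle R_{v\partial x_a}v,\partial x_i\rangle$. The main obstacle will be bookkeeping: correctly installing the rescaling $\partial x_i=J_i/t$ (so that what looks like an $O(t^3)$ effect is in fact the $O(t^2)$ coefficient of $g_{ai}$) and assembling the two inequivalent curvature inner products via pair symmetry into the correct rational coefficients $\tfrac13$ and $\tfrac23$.
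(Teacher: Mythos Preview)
Your argument is correct and complete: the identification of the Fermi coordinate fields along $\eta_v$ with (rescaled) Jacobi fields, the initial data $J_a(0)=\partial x_a|_p$, $J_a'(0)=-T_v(\partial x_a|_p)$ and $J_i(0)=0$, $J_i'(0)=\partial x_i|_p$, and the subsequent Taylor expansions via the Jacobi equation all check out, and the inner-product bookkeeping reproduces the three stated formulas with the right coefficients.

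Your route, however, is organized differently from the paper's. The paper does not pass through Jacobi fields; instead it differentiates the scalar functions $g_{\alpha\beta}(t)=\langle\partial x_\alpha,\partial x_\beta\rangle|_{\eta_v(t)}$ directly along $\eta_v$, computing $g_{\alpha\beta}(0)$, $g_{\alpha\beta}'(0)$, $g_{\alpha\beta}''(0)$ via iterated covariant derivatives and then invoking Gray's Lemma~9.20, which supplies the identities
\[
\nabla_{\partial x_i}\nabla_{\partial x_j}\partial x_a|_p=-R_{\partial x_i\partial x_a}\partial x_j|_p,\qquad
\nabla_{\partial x_i}\nabla_{\partial x_j}\partial x_k|_p=-\tfrac{1}{3}\bigl(R_{\partial x_i\partial x_j}\partial x_k+R_{\partial x_i\partial x_k}\partial x_j\bigr)|_p.
\]
These Fermi-coordinate identities are themselves ultimately consequences of Jacobi-field behavior, so the two arguments are cousins; but your version is more self-contained (no external lemma needed) and makes transparent \emph{why} the coefficients $-1$, $-\tfrac13$, $-\tfrac23$ arise: they come from the distinct Taylor weights $\tfrac12$ and $\tfrac16$ attached to the tangential and (rescaled) normal Jacobi fields. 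The paper's approach, on the other hand, extends more mechanically to the computations of $\nabla_{\partial x_\alpha}\partial x_j$ needed in the next lemma, since those are already set up in the same covariant-derivative framework.
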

\begin{proof}
Though these formulas can be derived by standard calculations and
some of them have occurred in the literature (cf.\cite{MMP06}), we
briefly prove the second here for the reader's convenience.

By formulas (\ref{covar-deriv}-\ref{coeffi ortho}), $g_{ai}(0)=0$,
$$g_{ai}'(0)=v(\langle\partial x_a,\partial
x_i\rangle)=\langle\nabla_v\partial x_a,\partial
x_i\rangle+\langle\partial x_a,\nabla_v\partial x_i\rangle=0,$$ and
\begin{eqnarray}g_{ai}''(0)&=&\frac{d}{dt}(\langle\nabla_N\partial x_a,\partial
x_i\rangle|_{\eta_v(t)}+\langle\partial x_a,\nabla_N\partial
x_i\rangle|_{\eta_v(t)})|_{t=0}\nonumber\\
&=&v(\langle\nabla_{\widetilde{N}}\partial x_a,\partial
x_i\rangle+\langle\partial x_a,\nabla_{\widetilde{N}}\partial
x_i\rangle) \quad (\emph{where}~~
\widetilde{N}=\sum_{j=m+1}^nv_j\partial
x_j)\nonumber\\
&=&\langle\nabla_v\nabla_{\widetilde{N}}\partial x_a,\partial
x_i\rangle+\langle\partial x_a,\nabla_v\nabla_{\widetilde{N}}\partial x_i\rangle\nonumber\\
&=&-\langle R_{v\partial x_a}v,\partial
x_i\rangle-\frac{1}{3}\langle
R_{v\partial x_i}v,\partial x_a\rangle \quad (\emph{by Lemma 9.20 in \cite{Gr04}})\nonumber \\
&=&-\frac{4}{3}\langle R_{v\partial x_a}v,\partial
x_i\rangle,\nonumber
\end{eqnarray}
where Lemma 9.20 in \cite{Gr04} shows (by properties of Fermi
coordinates and polarization)
\[\nabla_{\partial x_i}\nabla_{\partial x_j}\partial
x_a|_p=-R_{\partial x_i\partial x_a}\partial x_j|_p~,\]
\[\nabla_{\partial x_i}\nabla_{\partial
x_j}\partial x_k|_p=-\frac{1}{3}(R_{\partial x_i\partial
x_j}\partial x_k+R_{\partial x_i\partial x_k}\partial x_j)|_p~.\]

Therefore, by Taylor formula, we get the second expansion formula.
The other two can also be derived similarly.
\end{proof}
\begin{lem}\label{covar of coord fields}
At the point $\eta_v(t)=exp_p(tv)\in P_t$, the following expansions
hold
\begin{eqnarray}
\nabla_{\partial x_a}\partial x_j&=&\sum\limits_b-T^j_{ab}\partial
x_b-t\sum\limits_b\left(\langle R_{v\partial x_a}\partial
x_j,\partial x_b\rangle+\sum\limits_cT^j_{ac}T^v_{cb}\right)\partial
x_b\nonumber\\&&-t\sum\limits_k\langle R_{v\partial x_a}\partial
x_j,\partial x_k\rangle\partial
x_k+\sum\limits_{\alpha}O(t^2)_{\alpha}\partial x_{\alpha},\nonumber\\
\nabla_{\partial x_l}\partial
x_j&=&-\frac{t}{3}\sum\limits_b(\langle R_{v\partial x_l}\partial
x_j,\partial x_b\rangle+\langle R_{v\partial x_j}\partial
x_l,\partial x_b\rangle)\partial
x_b\nonumber\\&&-\frac{t}{3}\sum\limits_k(\langle R_{v\partial
x_l}\partial x_j,\partial x_k\rangle+\langle R_{v\partial
x_j}\partial x_l,\partial x_k\rangle)\partial
x_k+\sum\limits_{\alpha}O(t^2)_{\alpha}\partial x_{\alpha}.\nonumber
\end{eqnarray}
\end{lem}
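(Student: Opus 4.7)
The plan is to Taylor-expand each of the two covariant derivatives along the normal geodesic $\eta_v(t)=\exp_p(tv)$. Because the $v_j$ are constants and the Fermi coordinates of $\eta_v(t)$ are $(0,\dots,0,tv_{m+1},\dots,tv_n)$, the coordinate vector field $\widetilde N=\sum_{j=m+1}^{n} v_j\,\partial x_j$ satisfies $\widetilde N|_{\eta_v(t)}=\eta_v'(t)$. Hence for any smooth vector field $F=\sum_\alpha f^\alpha(t)\,\partial x_\alpha$ along $\eta_v$, the scalar coefficients admit $f^\alpha(t)=f^\alpha(0)+t\,\widetilde N(f^\alpha)|_p+O(t^2)$, where $\widetilde N(f^\alpha)|_p$ is read off from $\nabla_{\widetilde N}F=\widetilde N(f^\alpha)\,\partial x_\alpha+f^\beta\,\nabla_{\widetilde N}\partial x_\beta$ by projecting against the orthonormal basis $\{\partial x_\alpha|_p\}$. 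It is therefore enough to compute, for each $F$ in the lemma, the initial value $F|_p$, the covariant derivative $\nabla_{\widetilde N}F|_p$, and the ``frame-rotation'' vectors $\nabla_{\widetilde N}\partial x_\beta|_p$.

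The initial values at $p$ follow from (\ref{covar-deriv}) and the shape-operator identity $T^j_{ab}=-\langle\nabla_{\partial x_a}\partial x_j,\partial x_b\rangle|_p$: one obtains $\nabla_{\partial x_a}\partial x_j|_p=-\sum_b T^j_{ab}\,\partial x_b|_p$ with no normal component, and $\nabla_{\partial x_l}\partial x_j|_p=0$ (indeed $\nabla_{\partial x_i}\partial x_j|_\mathcal{U}\equiv0$). Summing the latter against $v_j$ also shows $\nabla_{\widetilde N}\partial x_j|_\mathcal{U}\equiv 0$, while the same shape-operator identity combined with $[\widetilde N,\partial x_b]=0$ gives $\nabla_{\widetilde N}\partial x_b|_p=\nabla_{\partial x_b}\widetilde N|_p=-\sum_c T^v_{bc}\,\partial x_c|_p$.

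For the first expansion, I would use $[\widetilde N,\partial x_a]=0$ and the sign convention for $R_{XY}Z$ implicit in the definition of $R(t)U=R_{NU}N$ preceding (\ref{Riccati eq}) to write $\nabla_{\widetilde N}\nabla_{\partial x_a}\partial x_j=\nabla_{\partial x_a}\nabla_{\widetilde N}\partial x_j-R_{\widetilde N\partial x_a}\partial x_j$; since $\nabla_{\widetilde N}\partial x_j$ vanishes on $\mathcal{U}$ and $\partial x_a$ is tangent to $\mathcal{U}$, the first term dies at $p$, leaving $-R_{v\partial x_a}\partial x_j|_p$. Feeding this into $\widetilde N(f^\alpha)|_p\,\partial x_\alpha|_p=\nabla_{\widetilde N}F|_p-f^\beta(0)\,\nabla_{\widetilde N}\partial x_\beta|_p$, using $f^b(0)=-T^j_{ab}$, $f^k(0)=0$, and the frame-rotation vectors computed above, produces exactly the tangential coefficient $-\bigl(\langle R_{v\partial x_a}\partial x_j,\partial x_b\rangle+\sum_c T^j_{ac}T^v_{cb}\bigr)$ and normal coefficient $-\langle R_{v\partial x_a}\partial x_j,\partial x_k\rangle$ of the $t$-term. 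For the second expansion both $F|_p$ and all $f^\beta(0)$ vanish, so no frame-rotation correction appears; one only needs $\nabla_{\widetilde N}\nabla_{\partial x_l}\partial x_j|_p=\sum_k v_k\,\nabla_{\partial x_k}\nabla_{\partial x_l}\partial x_j|_p$, and the identity $\nabla_{\partial x_k}\nabla_{\partial x_l}\partial x_j|_p=-\tfrac13\bigl(R_{\partial x_k\partial x_l}\partial x_j+R_{\partial x_k\partial x_j}\partial x_l\bigr)|_p$ already quoted in the proof of Lemma~\ref{metric expa} gives, after summation against $v_k$ and splitting into tangential and normal parts, the stated coefficients $-\tfrac{1}{3}\bigl(\langle R_{v\partial x_l}\partial x_j,\cdot\rangle+\langle R_{v\partial x_j}\partial x_l,\cdot\rangle\bigr)$. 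The main obstacle will be sign-bookkeeping: one must use the paper's specific sign convention for $R_{XY}Z$ and remember to subtract the frame-rotation term $f^\beta(0)\,\nabla_{\widetilde N}\partial x_\beta$, otherwise the curvature terms or the $T^jT^v$ correction in the first formula come out with the wrong sign.
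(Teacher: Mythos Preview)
Your proposal is correct and follows essentially the same strategy as the paper: compute the value and the $v$-derivative of $\nabla_{\partial x_a}\partial x_j$ (resp.\ $\nabla_{\partial x_l}\partial x_j$) at $p$ using the curvature commutator and Gray's Lemma~9.20, then read off the coordinate coefficients to first order in $t$. The only organizational difference is that the paper differentiates the inner products $\langle\nabla_{\partial x_a}\partial x_j,\partial x_\beta\rangle$ and then inverts via the metric expansion of Lemma~\ref{metric expa}, whereas you apply the Leibniz rule $\nabla_{\widetilde N}F=\widetilde N(f^\alpha)\partial x_\alpha+f^\beta\nabla_{\widetilde N}\partial x_\beta$ to isolate $\widetilde N(f^\alpha)$ directly; the ``frame-rotation'' correction $-f^\beta(0)\nabla_{\widetilde N}\partial x_\beta|_p$ in your version plays exactly the role that the $g'_{cb}(0)=-2T^v_{cb}$ term plays in theirs.
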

\begin{proof}By definitions,
$$\langle\nabla_{\partial x_a}\partial x_j, \partial
x_b\rangle|_p=-T^j_{ab},\quad \langle\nabla_{\partial x_a}\partial
x_j,
\partial x_k\rangle|_p=0.$$
Similarly as in the proof of Lemma \ref{metric expa}, we have
\begin{eqnarray}
v\langle\nabla_{\partial x_a}\partial x_j, \partial
x_b\rangle&=&\langle\nabla_v\nabla_{\partial x_a}\partial x_j,
\partial x_b\rangle+\langle\nabla_{\partial x_a}\partial x_j, \nabla_v\partial
x_b\rangle\nonumber\\
&=&-\langle R_{v\partial x_a}\partial x_j,\partial
x_b\rangle+\sum\limits_{c}T^j_{ac}T^{v}_{cb},\nonumber
\end{eqnarray}
\begin{eqnarray}
v\langle\nabla_{\partial x_a}\partial x_j, \partial
x_k\rangle&=&\langle\nabla_v\nabla_{\partial x_a}\partial x_j,
\partial x_k\rangle+\langle\nabla_{\partial x_a}\partial x_j, \nabla_v\partial
x_k\rangle\nonumber\\
&=&-\langle R_{v\partial x_a}\partial x_j,\partial
x_k\rangle.\nonumber
\end{eqnarray}
Suppose that $$\nabla_{\partial x_a}\partial
x_j|_{\eta_v(t)}=\sum\limits_{\alpha}\xi_{\alpha}(t)\partial
x_{\alpha}.$$ Then
$$\langle\nabla_{\partial x_a}\partial x_j, \partial
x_{\beta}\rangle|_{\eta_v(t)}=\sum\limits_{\alpha}\xi_{\alpha}(t)g_{\alpha\beta}(t),$$
and thus by Lemma \ref{metric expa}, we get
\begin{itemize}
\item[] $ \xi_b(t)=-T^j_{ab}-t\left(\langle R_{v\partial x_a}\partial
x_j,\partial
x_b\rangle+\sum\limits_cT^j_{ac}T^v_{cb}\right)+O(t^2),$
\item[] $\xi_k(t)=-t\langle R_{v\partial x_a}\partial x_j,\partial
x_k\rangle+O(t^2),$
\end{itemize}
which completes the proof of the first formula of the lemma.

The second formula can also be derived similarly.
\end{proof}

Now we are ready to give the power series expansion of the shape
operator. From Lemma \ref{covar of coord fields}, one can
immediately deduce the following.
\begin{lem}\label{shape exp1}
At the point $\eta_v(t)=exp_p(tv)\in P_t$, the following expansions
hold
\begin{eqnarray}
\nabla_{\partial x_a}N&=&\sum\limits_b-T^v_{ab}\partial
x_b-t\sum\limits_b\left(\langle R_{v\partial x_a}v,\partial
x_b\rangle+\sum\limits_cT^v_{ac}T^v_{cb}\right)\partial
x_b\nonumber\\&&-t\sum\limits_k\langle R_{v\partial x_a}v,\partial
x_k\rangle\partial
x_k+\sum\limits_{\alpha}O(t^2)_{\alpha}\partial x_{\alpha},\nonumber\\
\nabla_{\partial x_l}N&=&\frac{1}{t}\partial
x_l-\frac{v_l}{t}N-\frac{t}{3}\left(\sum\limits_b\langle
R_{v\partial x_l}v,\partial x_b\rangle\partial
x_b+\sum\limits_k\langle R_{v\partial x_l}v,\partial
x_k\rangle\partial
x_k\right)+\sum\limits_{\alpha}O(t^2)_{\alpha}\partial
x_{\alpha}.\nonumber
\end{eqnarray} \hfill $\Box$
\end{lem}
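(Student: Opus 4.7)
The plan is to differentiate the explicit formula $N=\sum_{j=m+1}^{n}(x_j/\sigma)\,\partial x_j$ via the Leibniz rule and then substitute the expansions of Lemma \ref{covar of coord fields} for the connection terms. Because $\sigma$ and the $x_j$ depend only on the normal coordinates, the coefficient functions $x_j/\sigma$ are annihilated by the tangential fields $\partial x_a$ and take the value $v_j$ at $\eta_v(t)$, while the normal fields $\partial x_l$ differentiate them nontrivially and produce the singular $1/t$ contributions. This explains why the two displayed expansions look structurally different even though they arise from the same reduction.

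For the tangential formula I would simply write
\[ \nabla_{\partial x_a} N \big|_{\eta_v(t)} = \sum_{j} v_j\, \nabla_{\partial x_a}\partial x_j \big|_{\eta_v(t)}, \]
insert the first expansion of Lemma \ref{covar of coord fields}, and contract in $j$. The resulting sums collapse by the linearity of the shape operator and of the curvature tensor in the normal slot, namely $\sum_j v_j T^j_{ab}=T^v_{ab}$ and $\sum_j v_j\langle R_{v\partial x_a}\partial x_j,\cdot\rangle=\langle R_{v\partial x_a}v,\cdot\rangle$, yielding the claimed formula term by term.

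For the normal formula I would first compute $\partial x_l(x_j/\sigma)=\delta_{lj}/\sigma - x_j x_l/\sigma^3$ and evaluate it at $\eta_v(t)$, where $x_j=tv_j$ and $\sigma=t$, to obtain $\delta_{lj}/t-v_jv_l/t$; summing against $\partial x_j$ produces exactly the leading pole $(1/t)\partial x_l-(v_l/t)N$. For the remaining regular part $\sum_j v_j\nabla_{\partial x_l}\partial x_j$, Lemma \ref{covar of coord fields} contributes two curvature terms, and the key observation is that the second of them, $\sum_j v_j R_{v\partial x_j}(\cdot)=R_{v,v}(\cdot)$, vanishes by the antisymmetry of the Riemann tensor in its first two slots; only the $\langle R_{v\partial x_l}v,\cdot\rangle$ contribution survives, matching the statement. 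The one step requiring genuine care, and which I expect to be the main obstacle, is precisely this normal-direction bookkeeping: extracting the singular $1/t$ pole cleanly and spotting the antisymmetry cancellation that collapses two curvature terms into one, while the rest is linear algebra on the summation indices.
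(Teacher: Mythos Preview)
Your proposal is correct and follows essentially the same route as the paper, which simply states that the lemma is an immediate consequence of Lemma~\ref{covar of coord fields}. You have merely made explicit the Leibniz rule applied to $N=\sum_j(x_j/\sigma)\partial x_j$, the vanishing of $\partial x_a(x_j/\sigma)$, the evaluation $x_j/\sigma=v_j$ along $\eta_v$, and the antisymmetry cancellation $R_{vv}=0$ in the normal-direction contraction---all of which are the steps hidden behind the word ``immediately''.
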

Notice that along the geodesic $\eta_v(t)$ one can always choose a
system of Fermi coordinates $(x_1,\cdots,x_n)$ such that it
satisfies a further property (Lemma 2.5 in \cite{Gr04}):
$$\partial x_n|_{\eta_v(t)}=\eta_v'(t)=N|_{\eta_v(t)}.$$
Then we denote by $(S_{\alpha\beta})$ the coefficients of the
operator $S$ defined by (\ref{shape def}) under such Fermi
coordinates, \emph{i.e.}, $S(\partial
x_{\alpha})=\sum\limits_{\beta=1}^nS_{\alpha\beta}\partial
x_{\beta}$. Note that $S(\partial
x_n)|_{\eta_v(t)}=S(N)|_{\eta_v(t)}=0$ and $\partial
x_n|_{\eta_v(0)}=\eta_v'(0)=v$ (which implies
$v_{m+1}=\cdots=v_{n-1}=0, v_n=1$). Therefore, from Lemma \ref{shape
exp1}, we obtain
\begin{prop}
At the point $\eta_v(t)=exp_p(tv)\in P_t$, the following expansion
holds
\begin{equation}\label{shape exp2}
S=(S_{\alpha\beta})=\left(\begin{array}{ccc}T_v+tA+\mathcal{O}(t^2)&
tB+\mathcal {O}(t^2)& \mathcal{O}(t^2)
\\tC+\mathcal {O}(t^2)&-\frac{1}{t}I+tD+\mathcal {O}(t^2)& \mathcal{O}(t^2)
\\0& 0& 0 \end{array}\right),
\end{equation}
where $A=(\langle R_{v\partial x_a}v,\partial
x_b\rangle+\sum\limits_cT^v_{ac}T^v_{cb})$ is a matrix with indices
$a,b\in\{1,\cdots,m\}$; $D=(\frac{1}{3}\langle R_{v\partial
x_l}v,\partial x_k\rangle)$ is a matrix with indices
$l,k\in\{m+1,\cdots,n-1\}$; $B=(\langle R_{v\partial x_a}v,\partial
x_k\rangle)$; $C=(\frac{1}{3}\langle R_{v\partial x_l}v,\partial
x_b\rangle)$; $\mathcal {O}(t^2)$ denotes matrices with elements of
order not less than $2$.\hfill $\Box$
\end{prop}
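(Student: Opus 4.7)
The plan is to read off the matrix entries $S_{\alpha\beta}$ directly from Lemma \ref{shape exp1} by substituting the specialized Fermi coordinates in which $\partial x_n|_{\eta_v(t)} = N|_{\eta_v(t)}$, as guaranteed by Lemma 2.5 of \cite{Gr04}. Under this normalization, $v = \partial x_n|_p$, so $v_{m+1} = \cdots = v_{n-1} = 0$ and $v_n = 1$; in particular the column vector $\bar v = (0,\ldots,0,1)^T$ in the statement is forced by the coordinate choice rather than imposed.

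Given this setup, I would first dispatch the trivial bottom row: along $\eta_v(t)$ one has $S(\partial x_n) = S(N) = 0$, yielding the zero third row. For the first row block $a \in \{1,\ldots,m\}$, I would apply the first formula of Lemma \ref{shape exp1} to $S(\partial x_a) = -\nabla_{\partial x_a} N$ and collect its $\partial x_b$-, $\partial x_k$-, and $\partial x_n$-coefficients. The first two give the blocks $T_v + tA$ and $tB$ verbatim. The third is $t\langle R_{v\partial x_a}v, \partial x_n\rangle + \mathcal{O}(t^2)$, which collapses to $\mathcal{O}(t^2)$ because $\partial x_n|_{\eta_v(t)} = v$ and the curvature tensor satisfies $\langle R_{vX}v, v\rangle = 0$; this is the one conceptual step that produces an entry of order $t^2$ rather than $t$ in the $(1,3)$ position.

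For the middle row block, I would apply the second formula of Lemma \ref{shape exp1} to $S(\partial x_l) = -\nabla_{\partial x_l} N$. Since $v_l = 0$ for $l \in \{m+1,\ldots,n-1\}$, the singular piece $-\frac{v_l}{t} N$ drops out and the leading term reduces to $-\frac{1}{t}\partial x_l$, producing $-\frac{1}{t} I$ on the diagonal of the $(2,2)$ block. The $O(t)$ curvature contributions then assemble, via their $\partial x_b$- and $\partial x_k$-components, into the matrices $tC$ and $tD$ as written. Finally, collecting the coefficients of $\partial x_n$ in $-\nabla_{\partial x_l} N$ across the row block gives the column $\frac{v_l}{t} + \mathcal{O}(t^2)$, which under our normalization is exactly $\frac{1}{t}\bar v + \mathcal{O}(t^2)$.

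There is no serious analytic obstacle: the proposition is an organized transcription of Lemma \ref{shape exp1} together with one use of the symmetry $\langle R_{vX}v, v\rangle = 0$. The only real care needed is bookkeeping — ensuring each coefficient from Lemma \ref{shape exp1} is dropped into the right block and that terms involving $\langle R_{v\partial x_\alpha}v, \partial x_n\rangle = \langle R_{v\partial x_\alpha}v, v\rangle$ are identified as zero before being swallowed into the $\mathcal{O}(t^2)$ remainder, which is what validates the block decomposition exactly as stated.
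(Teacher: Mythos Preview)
Your proposal is correct and follows exactly the route the paper takes: the paper's proof is literally the sentence ``Therefore, from Lemma \ref{shape exp1}, we obtain'' together with the preceding remark that one may choose Fermi coordinates with $\partial x_n|_{\eta_v(t)}=N$ (so $v_l=0$ for $l<n$, $v_n=1$) and that $S(N)=0$; you have simply written out the block-by-block bookkeeping that this sentence summarizes. One tiny slip: when you kill $\langle R_{v\partial x_a}v,\partial x_n\rangle$, the identification is $\partial x_n|_p=v$ (the curvature coefficients in Lemma \ref{shape exp1} are values at $p$), not $\partial x_n|_{\eta_v(t)}=v$; the conclusion is unaffected.
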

Recall that $S(t)$ is the restriction to $P_t$ of the operator $S$
and as a unit normal vector of $P_t$, $\partial x_n|_{\eta_v(t)}=N$
is an eigenvector of $S$ with corresponding eigenvalue $0$. Thus the
expansion (\ref{shape exp2}) implies
\begin{cor}\label{shape exp3}
The principal curvatures of $P_t$ at $\eta_v(t)$ are just the
eigenvalues of the matrix (with notations as above)
$$
\bar{S}(t):=\left(\begin{array}{cc}T_v+tA+\mathcal{O}(t^2)&
tB+\mathcal {O}(t^2)
\\tC+\mathcal {O}(t^2)&-\frac{1}{t}I+tD+\mathcal {O}(t^2)
\end{array}\right).
$$\hfill $\Box$
\end{cor}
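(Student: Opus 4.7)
The plan is to read off the principal curvatures from the matrix expansion (\ref{shape exp2}) by combining two structural constraints on $S$ with a cofactor expansion of the characteristic polynomial; essentially no further geometric input is needed beyond what has already been recorded.

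First I will isolate the two identities forced by the definition $SU = -\nabla_U N$ along any tube: namely $SN = 0$, and $S$ preserves the codimension-one distribution $TP_t$ because
\[\langle SU, N\rangle = -\langle\nabla_U N, N\rangle = -\tfrac{1}{2}U\langle N,N\rangle = 0\]
for every $U$, using $|N|\equiv 1$. In the special Fermi chart chosen so that $\partial x_n|_{\eta_v(t)} = N|_{\eta_v(t)}$, the first identity forces the row of $(S_{\alpha\beta})$ labelled by $\alpha = n$ to vanish, which is exactly the bottom $(0,0,0)$ row visible in (\ref{shape exp2}). The second identity yields the orthogonal splitting $S = 0 \oplus S(t)$ with respect to $T_{\eta_v(t)}M = \mathbb{R}N \oplus T_{\eta_v(t)}P_t$, so that the spectrum of $S$ on $T_{\eta_v(t)}M$ is $\{0\}$ together with the multiset of principal curvatures of $P_t$ at $\eta_v(t)$.

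Next I will exploit the vanishing of the last row by expanding $\det(\lambda I - (S_{\alpha\beta}))$ along it, which immediately produces the factorisation
\[\det\bigl(\lambda I - (S_{\alpha\beta})\bigr) = \lambda \cdot \det\bigl(\lambda I - \bar{S}(t)\bigr),\]
where $\bar{S}(t)$ is the $(n-1)\times(n-1)$ upper-left block displayed in the statement. Since eigenvalues of a linear operator are basis independent, the roots of the left-hand side give precisely the spectrum of $S$ on $T_{\eta_v(t)}M$. Comparing with the splitting above, the root $\lambda = 0$ is accounted for by the normal direction $N$, while the remaining $n-1$ eigenvalues, which are the roots of $\det(\lambda I - \bar{S}(t))$, must exhaust the principal curvatures of $P_t$ at $\eta_v(t)$.

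I do not anticipate a real obstacle; the one point that calls for care is that the coordinate vectors $\partial x_a$ and $\partial x_l$ (with $l\le n-1$) are not literally orthogonal to $N$ off $P$ (the expansions of $g_{an}$ and $g_{ln}$ from Lemma \ref{metric expa} only guarantee orthogonality modulo higher-order terms), so that $\bar{S}(t)$ is \emph{not} the matrix of $S(t)$ in an orthonormal frame of $TP_t$. The characteristic-polynomial argument is insensitive to this nuance, and it is exactly this robustness that makes the power-series form of $\bar{S}(t)$ useful for the subsequent proofs of Theorems \ref{minimal focal} and \ref{austere focal}.
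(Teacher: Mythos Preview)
Your argument is correct and is essentially the paper's own (which simply records that $\partial x_n|_{\eta_v(t)}=N$ is an eigenvector of $S$ with eigenvalue $0$ and reads off the corollary directly from the block form (\ref{shape exp2})); your cofactor expansion along the vanishing last row makes that implicit step explicit. One minor over-caution: by the Generalized Gauss Lemma $\langle\partial x_\alpha,N\rangle=\partial x_\alpha(\sigma)$ vanishes \emph{exactly} at $\eta_v(t)$ for every $\alpha\le n-1$, so $\partial x_1,\ldots,\partial x_{n-1}$ do form a genuine (non-orthonormal) basis of $T_{\eta_v(t)}P_t$ and $\bar S(t)$ really is the matrix of $S(t)$ in that basis---though, as you note, the characteristic-polynomial route is insensitive to this.
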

\section{Proof of Theorems and curvature restrictions}\label{proof}
In this section, we apply the expansion formula of the shape
operator (Corollary \ref{shape exp3}) to prove the theorems stated
in the introduction. As byproducts, there are some curvature
restrictions necessary for a Riemannian manifold to admit (certain)
isoparametric functions.

{\bf{Proof of Theorem \ref{minimal focal}:}}

By the structural result of \cite{Wa87}, each regular level
hypersurface $M_s=f^{-1}(s)$ of a transnormal function $f$ is a
tubular hypersurface of either of the focal varieties $M_{\pm}$ (if
non-empty). Then we could apply the results of last section here
with $P=M_{-}$ (alternatively $M_{+}$) and its tubular hypersurface
$P_t$ at distance $t$ ($0<t<dist(M_{-},M_{+})$) from $P$ is just
some regular level hypersurface of $f$. Since now $f$ is an
isoparametric function, $P_t$ has constant mean curvature $H(t)$,
which implies, by Corollary \ref{shape exp3},
\begin{equation}\label{trace}
Trace(\bar{S}(t))=Trace(T_v)-\frac{n-m-1}{t}+t(Trace(A)+Trace(D))+O(t^2)=H(t)
\end{equation}
is independent of the choices of point $p\in P$ and unit normal
vector $v\in\mathcal {V}_pP$. Therefore, comparing the coefficients
of $t^{\lambda}$'s, we know that $Trace(T_v)$ is a constant
independent of the choices of unit normal vector $v\in\mathcal
{V}_pP$ and thus must vanish, since by linearity,
$Trace(T_v)=Trace(T_{-v})=-Trace(T_v)$. This completes the proof of Theorem
\ref{minimal focal}. \hfill $\Box$

Formula (\ref{trace}) in the proof above also implies
\begin{equation}\label{curv restr 1}
\Gamma_P:=Trace(A)+Trace(D)=\frac{1}{3}\left(\rho^N(v,v)+2\sum_{a=1}^mK^N(v,\partial
x_a)+3\parallel T_v\parallel^2\right)
 \end{equation}
 is a constant independent
of the choices of point $p\in P$ and unit normal vector
$v\in\mathcal {V}_pP$, where $\rho^N(v,v)$, $K^N(v,\partial x_a)$
are the Ricci curvature in direction $v$ and the sectional curvature
of the plane spanned by $(v,\partial x_a)$ of $N$ respectively.
Moreover, by taking the Trace of (\ref{curv restr 1}) with respect to
$v$ and by Gauss equation,
\begin{equation}\label{curv restr 2}
\frac{1}{3}\sum_{ij=m+1}^nK^N_{ij}+\sum_{ab=1}^mK^N_{ab}+\sum_{ia}K^N_{ia}-R^P=(n-m)\Gamma_P
\end{equation} is a constant function on $P$, where $K^N_{\alpha\beta}:=K^N(\partial
x_{\alpha},\partial x_{\beta})$, and $R^P$ denotes the scalar
curvature of $P$. In particular, if $dimP=n-1$, or equivalently, if the
isoparametric function $f$ is not proper, then (\ref{curv restr 2})
reduces to
$$
R^N-\rho^N(\nu,\nu)-R^P=\Gamma_P,
$$
 where $\nu$ is the unit normal
vector of $P$. If in addition $N$ is an Einstein manifold,
\emph{i.e.}, $\rho^N\equiv Const$, then $P$ has constant scalar
curvature. In conclusion, we get:
\begin{cor}\label{minimal curv rest}
On the unit normal bundle of the focal varieties $M_{\pm}$ of an
isoparametric function $f$ on a complete Riemannian manifold $N$,
the function $\Gamma_P$ defined in (\ref{curv restr 1}) is constant
and the equality (\ref{curv restr 2}) holds. If in addition $N$ is
Einstein and $f$ is isoparametric and not proper, then the
``non-singular" focal variety is a minimal hypersurface with
constant scalar curvature. \hfill $\Box$
\end{cor}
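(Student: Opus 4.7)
The plan is to read off the conclusions directly from the Taylor expansion of the shape operator in Corollary \ref{shape exp3}, exploiting the fact that the isoparametric condition forces $H(t) = \mathrm{Trace}(\bar{S}(t))$ to depend only on $t$. Writing out $H(t)$ from (\ref{shape exp2}) yields the Laurent expansion (\ref{trace}); the $t^0$ and $t^{-1}$ coefficients were used to obtain Theorem \ref{minimal focal}, and the coefficient of $t^1$ is precisely $\Gamma_P = \mathrm{Trace}(A) + \mathrm{Trace}(D)$. Since this coefficient must also be independent of $p \in P$ and of the unit normal $v \in \mathcal{V}_p P$, the first assertion reduces to putting $\Gamma_P$ into the explicit curvature form (\ref{curv restr 1}).

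To obtain (\ref{curv restr 1}) I would unwind the matrix entries of $A$ and $D$ defined in (\ref{shape exp2}). With the sign convention implicit in $R(t)U = R_{NU}N$, the identity $\langle R_{v X}v, X\rangle = K^N(v,X)$ holds for unit orthogonal $v,X$, so that $\mathrm{Trace}(A) = \sum_a K^N(v, \partial x_a) + \|T_v\|^2$ and $\mathrm{Trace}(D) = \tfrac{1}{3}\sum_{l=m+1}^{n-1} K^N(v, \partial x_l)$. Splitting $\rho^N(v,v)$ along the tangent/normal decomposition of $T_p N$ (noting that $v = \partial x_n$ after the adapted rotation of the Fermi frame contributes nothing to its own Ricci trace) and collecting terms yields (\ref{curv restr 1}) verbatim.

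To derive (\ref{curv restr 2}), the next step is to take a further trace of (\ref{curv restr 1}) over an orthonormal basis $\{e_{m+1}, \ldots, e_n\}$ of $\mathcal{V}_p P$. Three ingredients combine: first, the identity $\sum_i \rho^N(e_i, e_i) = \sum_{ij} K^N_{ij} + \sum_{ia} K^N_{ia}$ obtained by splitting each Ricci curvature along the tangent/normal decomposition; second, the Gauss equation $\sum_{ab} K^N_{ab} - R^P = \sum_i \bigl(\|T_{e_i}\|^2 - (\mathrm{Trace}\, T_{e_i})^2\bigr)$, which converts the intrinsic quantities $\sum_i \|T_{e_i}\|^2$ into ambient data; and third, Theorem \ref{minimal focal}, which gives $\mathrm{Trace}(T_{e_i}) = 0$ for each $i$ and kills the $(\mathrm{Trace}\, T_{e_i})^2$ term. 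Collecting and clearing the factor of $1/3$ yields (\ref{curv restr 2}). The main technical care in the whole plan lies here: the sign convention in the Gauss equation must be arranged so that the squared-mean-curvature term drops out precisely because of minimality.

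For the final sentence I would specialize to the non-proper case, where the relevant focal variety has codimension one. Then the normal index $i$ takes only the value $n$, so $\sum_{ij} K^N_{ij} = 0$, $\sum_{ia} K^N_{ia} = \rho^N(\nu, \nu)$, and $\sum_{ab} K^N_{ab} = R^N - 2\rho^N(\nu, \nu)$, reducing (\ref{curv restr 2}) to $R^N - \rho^N(\nu, \nu) - R^P = \Gamma_P$. Under the Einstein hypothesis both $R^N$ and $\rho^N(\nu,\nu)$ are constants, while the first assertion of the corollary makes $\Gamma_P$ constant on $P$; hence $R^P$ is constant. Combined with Theorem \ref{minimal focal}, the ``non-singular'' focal variety is a minimal hypersurface of constant scalar curvature, as claimed.
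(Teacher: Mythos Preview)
Your proposal is correct and follows the same approach as the paper: the paper's argument is the terse discussion immediately preceding the corollary, where (\ref{curv restr 1}) is read off as the $t^1$ coefficient of (\ref{trace}), (\ref{curv restr 2}) is obtained by tracing over the unit normal sphere and invoking the Gauss equation together with the minimality from Theorem~\ref{minimal focal}, and the codimension-one reduction plus the Einstein hypothesis then force $R^P$ to be constant. You have simply made the intermediate computations (the explicit forms of $\mathrm{Trace}(A)$, $\mathrm{Trace}(D)$, and the Gauss-equation bookkeeping) visible, which the paper leaves implicit.
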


{\bf{Proof of Theorem \ref{austere focal}:}}

As before, we can apply Corollary \ref{shape exp3} in our case with
$P=M_{-}$ (alternatively $M_{+}$). Now by assumption, $P_t$ has constant principal
curvatures, say $\lambda_1(t),\cdots,\lambda_{n-1}(t)$, with respect to the unit normal vector field in the direction of $\nabla f$. All $\lambda_i(t)$'s depend only on $t>0$, regardless of whether $P_t$ is connected or not. Indeed, when $P$ has codimension $1$, as a level hypersurface $P_t$ may still be disconnected. However, by Footnote \ref{constantcommon}, $\lambda_i(t)$'s depend only on $f|_{P_t}$ which is a one-parameter function of $t$. Then by Corollary \ref{shape exp3},
\begin{equation}\label{shape exp4}
\bar{S}(t)=\left(\begin{array}{cc}T_v& 0
\\0&-\frac{1}{t}I\end{array}\right)+
t\left(\begin{array}{cc}A&B
\\C&D\end{array}\right)+\mathcal {O}(t^2)
\end{equation}
has $\lambda_1(t),\cdots,\lambda_{n-1}(t)$ as its eigenvalues. Let
$\mu_1(v),\cdots,\mu_m(v)$ be the eigenvalues of $T_v$. Then without
loss of generality, by (\ref{shape exp4}) we can assume
$$
\lambda_a(t)=\mu_a(v)+O(t),\quad \lambda_k(t)=-\frac{1}{t}+O(t),
\quad for~ a=1,\cdots,m;~ k=m+1,\cdots,n-1.
$$
 Taking limit of $t$ to
$0$, we have
\begin{equation}\label{austere focal equ}
\mu_a(v)=\lim_{t\rightarrow0+}\lambda_a(t)=:\lambda_a
~(const)\end{equation} for any $p\in P$ and unit normal vector
$v\in\mathcal {V}_pP$, which completes the proof. \hfill $\Box$

Note that for any unit normal vector $v\in\mathcal {V}_pP$, $T_v$
has the same constant eigenvalues $\lambda_1,\cdots,\lambda_m$,
while by linearity, $T_{-v}=-T_v$. Thus the constants $\lambda_a$
must occur in pairs of opposite signs, \emph{i.e.},
$\kappa_1,-\kappa_1,\cdots,\kappa_q,-\kappa_q,0,\cdots,0$. Such
submanifold with principal curvatures in any direction occurring in
pairs of opposite signs is called austere submanifold in
\cite{HL82}. On the other hand, by the Riccati equation
(\ref{Riccati eq}), we know that the Ricci curvature
$\rho^N(\nu,\nu)$ is constant on each $P_t$ (where $\nu$ is the unit
normal vector field) and thus
$$\rho^N(v,v)=\lim_{t\rightarrow0+}\rho^N(\nu,\nu)$$ is constant on
$P$ for any unit normal vector field $v$, which together with
formulas (\ref{curv restr 1}, \ref{austere focal equ}) imply:
\begin{equation}\label{curv restr 4}
\sum_{a=1}^mK^N(v,\partial x_a)\equiv const.
\end{equation}
Thus if $N$ has constant scalar curvature on $P$, so does $P$ by
formulas (\ref{curv restr 2}, \ref{curv restr 4}) and the constancy of
$\rho^N(v,v)$. In conclusion, we obtain:
\begin{cor}\label{austere curv rest}
Suppose that each isoparametric hypersurface $M_t$ has constant
principal curvatures. Then the Ricci curvature $\rho^N$ of $N$ is
constant on each $M_t$ and the focal varieties $M_{\pm}$ in their
normal directions. Moreover, the curvature identity (\ref{curv restr
4}) holds on the focal varieties. If in addition the scalar
curvature of $N$ is constant on $M_{\pm}$, then the focal varieties
also have constant scalar curvature. \hfill $\Box$
\end{cor}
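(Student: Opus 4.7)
The plan is to combine the Riccati equation (\ref{Riccati eq}) with the constancy statement for $T_v$ supplied by Theorem \ref{austere focal} and with the pointwise identities (\ref{curv restr 1}) and (\ref{curv restr 2}) already extracted in the proof of Theorem \ref{minimal focal}. The corollary breaks into four claims: constancy of $\rho^N$ on each regular level $M_t$, constancy of $\rho^N$ on the unit normal bundle of $M_{\pm}$, the identity (\ref{curv restr 4}) on $M_{\pm}$, and constancy of the scalar curvature of $M_{\pm}$ under the added hypothesis on $R^N$.

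First I would apply (\ref{Riccati eq}) on each regular tube $P_t = M_t$. Since each $M_t$ has constant principal curvatures $\lambda_1(t),\cdots,\lambda_{n-1}(t)$, the mean curvature $H(t)$ and the squared norm $\|S(t)\|^2=\sum_i\lambda_i(t)^2$ depend only on $t$, and so does $H'(t)$. Hence (\ref{Riccati eq}) forces $\rho^N(N,N)$ to be constant on each $M_t$. For the focal variety $P=M_{\pm}$, let $\eta_v(t)$ be the normal geodesic from $p\in P$ in a unit normal direction $v$. Since $\eta_v'(t)\to v$ and $\rho^N$ is a smooth tensor on $N$,
\[
\rho^N(v,v)=\lim_{t\to 0^+}\rho^N(N,N)\big|_{\eta_v(t)}
\]
is independent of the choice of $p$ and $v$, settling the first two claims.

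For (\ref{curv restr 4}), I would substitute these facts into (\ref{curv restr 1}). By (\ref{austere focal equ}) in Theorem \ref{austere focal}, the eigenvalues of $T_v$ form a fixed set $\{\lambda_1,\cdots,\lambda_m\}$ independent of $(p,v)$, so $\|T_v\|^2=\sum_a\lambda_a^2$ is a universal constant. Together with the constancy of $\rho^N(v,v)$ and of $\Gamma_P$, solving (\ref{curv restr 1}) for $\sum_{a=1}^m K^N(v,\partial x_a)$ yields (\ref{curv restr 4}) immediately.

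For the last claim I would read the remaining ingredients of (\ref{curv restr 2}) off one by one. Fixing $p\in M_{\pm}$ and an orthonormal basis $\{\partial x_i\}_{i=m+1}^n$ of the normal space, summing (\ref{curv restr 4}) over $i$ shows that $\sum_{i,a}K^N_{ia}$ is constant on $M_{\pm}$; summing $\rho^N(\partial x_i,\partial x_i)$ over $i$ and using the preceding then forces $\sum_{i\neq j}K^N_{ij}$ to be constant as well. Expanding $R^N=\sum_{a\neq b}K^N_{ab}+2\sum_{a,i}K^N_{ai}+\sum_{i\neq j}K^N_{ij}$ and using that $R^N$ is constant on $M_{\pm}$ by hypothesis, the tangential sum $\sum_{a\neq b}K^N_{ab}$ is constant too. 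Every term on the left of (\ref{curv restr 2}) except $-R^{M_{\pm}}$ is then a constant function on $M_{\pm}$ while the right side $(n-m)\Gamma_P$ is constant, forcing $R^{M_{\pm}}$ to be constant. The only mildly delicate step is the $t\to 0^+$ limit for $\rho^N$, but this is routine given smoothness of the ambient metric and the Fermi-coordinate behaviour already exploited in Section \ref{section-isop}.
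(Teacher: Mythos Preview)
Your proposal is correct and follows essentially the same route as the paper: the Riccati equation (\ref{Riccati eq}) gives constancy of $\rho^N(N,N)$ on each $M_t$, the limit $t\to 0^+$ transfers this to the unit normal bundle of $M_{\pm}$, (\ref{curv restr 1}) together with the constancy of $\|T_v\|^2$ from Theorem~\ref{austere focal} yields (\ref{curv restr 4}), and finally (\ref{curv restr 2}) forces $R^{M_\pm}$ to be constant. The only difference is cosmetic: where the paper simply cites (\ref{curv restr 2}), (\ref{curv restr 4}) and the constancy of $\rho^N(v,v)$ for the last step, you spell out explicitly how the three blocks $\sum_{i,a}K^N_{ia}$, $\sum_{i\neq j}K^N_{ij}$, $\sum_{a\neq b}K^N_{ab}$ are each pinned down---a helpful elaboration, but not a different argument.
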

The curvature restrictions above could also be derived by analyzing
the expansion formula (\ref{shape exp4}) which may imply some
subtler curvature relations (perhaps explicit higher order terms
would be required).
\begin{ack}
It is our great pleasure to thank Professor Gudlaugur Thorbergsson
for many useful discussions and kindly introductions of works on
normal holonomy theory, and also thank Professor Yoshihiro Ohnita
for his interests on our work. We would also like to thank the referees for their careful reviews and helpful comments.
\end{ack}

\end{document}